\newtheorem{theorem}{Theorem}[section]
\newtheorem{lemma}[theorem]{Lemma}
\newtheorem{proposition}[theorem]{Proposition}
\newtheorem{corollary}[theorem]{Corollary}
\theoremstyle{remark}
\newtheorem{remark}[theorem]{Remark}
\newcommand{\N}{\mathbb{N}}
\newcommand{\R}{\mathbb{R}}
\newcommand{\cR}{\mathcal{R}}
\renewcommand{\epsilon}{\varepsilon} 
\renewcommand{\phi}{\varphi} 
\renewcommand{\Re}{\operatorname{Re}}
\newcommand{\loc}{\mathrm{loc}}
\numberwithin{equation}{section}
\mathchardef\ordinarycolon\mathcode`\:
\title[Time derivative in parabolic Signorini problem]{Boundedness and continuity of the time derivative in the parabolic Signorini problem}
\author{Arshak Petrosyan}
\address[AP]{Department of Mathematics, Purdue University, West Lafayette, IN 47907, USA}
\email{arshak@purdue.edu}
\author{Andrew Zeller}
\address[AZ]{Department of Mathematics, Purdue University, West Lafayette, IN 47907, USA}
\email{zellera@purdue.edu}
\subjclass[2010]{Primary 35R35}
\keywords{Parabolic Signorini problem, time derivative,  boundedness,
  regular free boundary points, H\"older continuity}
\begin{document}

\begin{abstract} We prove the boundedness of the time derivative in
  the parabolic Signorini problem, as well as establish its H\"older
  continuity at regular free boundary points.
\end{abstract}

\maketitle

\section{Introduction and main results}
\label{sec:introduction}

Let $v$ be a weak solution of the \emph{parabolic Signorini problem}
\begin{align}
  \label{eq:sig1}
  \Delta v-\partial_t v =0&\quad\text{in }Q_1^+:=B_1^+\times(-1,0],\\
  \label{eq:sig2}
  v\geq \phi,\quad -\partial_{x_n} v\geq 0,\quad (v-\phi)\partial_{x_n}
  v=0&\quad\text{on }Q_1':=B_1'\times(-1,0],\\
  \label{eq:sig3}
  v(\cdot,-1)=\phi_0&\quad\text{in }B_1,
\end{align}
to be understood in the appropriate integral sense, where
$\phi:Q_1'\to \R$ is the \emph{thin obstacle} and $\phi_0$ is the
initial data satisfying the compatibility condition
$\phi_0\geq \phi(\cdot,0)$ on $B_1'$.  This kind of unilateral problem
appears in many applications, such as thermics (boundary heat
control), biochemistry (semipermeable membranes and osmosis), and
elastostatics (the original Signorini problem). It also serves as a
prototypical example of parabolic variational inequalities. We refer
to the book \cite{DL} for the derivation of such models as well as for
some basic existence and uniqueness results, and to \cite{DGPT} for
more recent results on the problem.

One of the main objects of study in the parabolic Signorini problem is
the apriori unknown \emph{free boundary}
$$
\Gamma(v):=\partial_{Q_1'}(\{v>\phi\}\cap Q_1'),
$$
which separates the regions where $v=\phi$ and $\partial_{x_n}v=0$
(here $\partial_{Q_1'}$ denotes the boundary in the relative topology
of $Q_1'$).

It is known that if $\phi$ is sufficiently regular, namely
$\phi\in H^{2,1}(Q_1')$ (see the end of the introduction for the
notations) then the Lipschitz regularity of $\phi_0$ in
$B_1^+\cup B_1'$ implies the local boundedness of the spatial gradient
$\nabla v$ in $Q_1^+\cup Q_1'$ (see \cite[Lemma~6]{AU1}), which then
implies the H\"older continuity
$\nabla v\in H^{\gamma,\gamma/2}_{\loc}(Q_1^+\cup Q_1')$ (see
\cite[Theorem~2.1]{AU2}), for some $\gamma>0$. Recently, it was shown
in \cite{DGPT} that $v\in H^{3/2,3/4}_{\loc}(Q_1^+\cup Q_1')$, which
is the optimal regularity of $v$, at least in the space variables $x$.
The paper \cite{DGPT} also gives a comprehensive treatment of the
problem from the free boundary regularity point of view, based on
Almgren-, Monneau-, and Weiss-type monotonicity formulas.

The aim of this paper is to obtain a better regularity in the time
variable $t$ for the solutions of the parabolic Signorini problem
above and to complement the results of \cite{DGPT}. It is known
already from \cite[Lemma~7]{AU2} that if the initial data
$\phi_0\in W^{2}_{\infty}(B_1^+)$, then the time derivatives
$\partial_t v$ will also be locally bounded in $Q_1^+\cup Q_1'$. This
assumption on the initial data $\phi_0$, however, is rather
restrictive and excludes a ``standard'' time-independent solution (for
$\phi\equiv 0$)
$$
v(x,t)=\Re(x_{n-1}+ix_n)^{3/2},\quad x_n\geq 0,
$$
which is clearly not in $W^2_{\infty}$.

Our first result shows that $\partial_t v$ is in fact bounded, without
any extra assumptions on the initial data, even though we will require
a bit more regularity on the thin obstacle $\phi$.

\begin{theorem}\label{thm:vt-bdd} Let $v\in
  H^{3/2,3/4}(Q_1^+\cup Q_1')$
  be a solution of the Signorini problem
  \textup{\eqref{eq:sig1}--\eqref{eq:sig2}} with
  $\phi\in H^{4,2}(Q_1')$. Then $\partial_t v$ is locally bounded in
  $Q_1^+\cup Q_1'$ and moreover
$$
\|\partial_t v\|_{L_\infty(Q_{1/2}^+)}\leq
C_n\left(\|v\|_{L_2(Q_1^+)}+\|\phi\|_{H^{4,2}(Q_1')}\right).
$$
\end{theorem}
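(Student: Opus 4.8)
The plan is to differentiate the problem in time, but since $\partial_t v$ has no a priori pointwise meaning, I would work with the backward difference quotients $w_h(x,t):=h^{-1}\bigl(v(x,t)-v(x,t-h)\bigr)$ for small $h>0$ and prove an $h$-uniform bound $\|w_h\|_{L_\infty(Q_{1/2}^+)}\le C$; letting $h\to0$ then gives the theorem. First I would normalize the obstacle away: with $\Phi(x,t):=\phi(x',t)$ (extension of $\phi$ constant in $x_n$) and $u:=v-\Phi$ one has $\partial_{x_n}\Phi\equiv0$, so $u$ solves the Signorini problem with zero thin obstacle, i.e.\ $u\ge0$, $\partial_{x_n}u\le0$, $u\,\partial_{x_n}u=0$ on $Q_1'$, and $\mathcal Hu=F$ in $Q_1^+$ with $\mathcal H:=\Delta-\partial_t$ and $F:=\partial_t\phi-\Delta'\phi$ ($\Delta'$ the Laplacian in $x'$), where $\|F\|_{H^{2,1}(Q_1^+)}\le C\|\phi\|_{H^{4,2}(Q_1')}$; the contribution of $\partial_t\Phi=\partial_t\phi$ to $\partial_t v$ is controlled by $\|\phi\|_{H^{4,2}(Q_1')}$, so it suffices to bound $\partial_t u$.

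The first key point is that $w_h$ inherits a Signorini-type structure. It satisfies $\mathcal H w_h=F_h$ in $Q_1^+$ with $F_h:=h^{-1}\bigl(F(\cdot,\cdot)-F(\cdot,\cdot-h)\bigr)$, so $\|F_h\|_{L_2(Q_1^+)}\le\|\partial_t F\|_{L_2(Q_1^+)}\le C\|\phi\|_{H^{4,2}(Q_1')}$ uniformly in $h$. On $Q_1'$, writing $a=u(\cdot,t)$, $b=u(\cdot,t-h)$, $p=\partial_{x_n}u(\cdot,t)$, $q=\partial_{x_n}u(\cdot,t-h)$, the complementarity conditions at the two time levels give $ap=bq=0$, $a,b\ge0$, $p,q\le0$, hence $h^{2}w_h\,\partial_{x_n}w_h=(a-b)(p-q)=-aq-bp\ge0$ on $Q_1'$. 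Consequently $\partial_{x_n}|w_h|\ge0$ on $Q_1'$, so if $\widetilde{|w_h|}$ denotes the even reflection of $|w_h|$ across $\{x_n=0\}$ then, by Kato's inequality together with the nonnegative single-layer measure produced by the reflection, $\mathcal H\,\widetilde{|w_h|}\ge-|\widetilde{F_h}|$ in $Q_1$; that is, $\widetilde{|w_h|}$ is a nonnegative subsolution of the heat equation up to an $L_2$ forcing bounded by $C\|\phi\|_{H^{4,2}(Q_1')}$. The local maximum principle for parabolic subsolutions (De Giorgi--Nash--Moser) then gives $\|w_h\|_{L_\infty(Q_{1/2}^+)}\le C\bigl(\|w_h\|_{L_2(Q_{3/4}^+)}+\|\phi\|_{H^{4,2}(Q_1')}\bigr)$, so everything reduces to an $h$-uniform bound on $\|w_h\|_{L_2(Q_{3/4}^+)}$.

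This last bound is the main obstacle. A priori, since $v$ is only H\"older-$\tfrac34$ in $t$, $w_h$ is merely $O(h^{-1/4})$, and the natural Caccioppoli inequality for $w_h$ — obtained by testing $\mathcal Hw_h=F_h$ against $\eta^2 w_h$ with a cutoff $\eta$ vanishing near $\{t=-1\}$, and exploiting the favorable sign of the boundary term $\int_{Q_1'}w_h\,\partial_{x_n}w_h\ge0$ — controls $\|\nabla w_h\|_{L_2}^2+\sup_t\|w_h(\cdot,t)\|_{L_2}^2$ on a smaller cylinder only in terms of $\int w_h^2|\partial_t\eta|$ on a larger one, which is circular. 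To break the circularity I would feed in the one genuinely $h$-uniform piece of information available: from $v\in H^{3/2,3/4}(Q_1^+\cup Q_1')$ and $\mathcal H v=0$ one gets $\partial_t v=\Delta v\in L_\infty\bigl((-\tfrac34,0];\,H^{-1/2-\epsilon}_{\loc}(B_{3/4}^+)\bigr)$, so $w_h$ is bounded, uniformly in $h$, in $L_2\bigl((-\tfrac34,0];H^{-s}(B_{3/4}^+)\bigr)$ for some $s<1$. Interpolating, $\|w_h(\cdot,t)\|_{L_2}^2\le C\,\|w_h(\cdot,t)\|_{H^{-s}}^{1+\theta}\,\|w_h(\cdot,t)\|_{H^1}^{1-\theta}$ with $\theta=\tfrac{1-s}{1+s}\in(0,1)$; inserting this into the right-hand side of the Caccioppoli inequality and using H\"older in $t$ replaces the offending term by $C\,(\text{uniform})^{1+\theta}\bigl(\|\nabla w_h\|_{L_2}^2+\|w_h\|_{L_2}^2\bigr)^{(1-\theta)/2}$ on the larger cylinder — a \emph{sublinear} power of the quantity on the left. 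Running this on a nested family $Q_{1/2}^+\subset\cdots\subset Q_{3/4}^+$ and absorbing via Young's inequality then yields $\|w_h\|_{L_2(Q_{3/4}^+)}\le C\bigl(\|v\|_{L_2(Q_1^+)}+\|\phi\|_{H^{4,2}(Q_1')}\bigr)$, uniformly in $h$. Combined with the subsolution estimate of the previous paragraph and the passage to the limit $h\to0$ (along which $w_h\rightharpoonup\partial_t u$ weakly-$*$ in $L_\infty(Q_{1/2}^+)$), this gives the theorem. Thus the technical heart is the interplay, in this last step, between the Signorini monotonicity transferred to $w_h$, the Caccioppoli estimate, and the negative-norm bound coming from the known $H^{3/2,3/4}$ regularity of $v$; the remaining ingredients are either standard reductions or standard linear parabolic theory.
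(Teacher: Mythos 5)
Your overall strategy coincides with the paper's: reduce to zero obstacle via $u=v-\phi$, take backward difference quotients in $t$, show an appropriate absolute value/positive part of the quotient is a subsolution of the heat operator (exploiting the complementarity conditions at the two time levels), then invoke a local $L_\infty$--$L_2$ estimate and pass to the limit $h\to 0$. Your derivation of the subsolution property is a nice variant: from $ap=bq=0$, $a,b\ge 0$, $p,q\le 0$ you get $(a-b)(p-q)\ge 0$, hence $\partial_{x_n}^+|w_h|\ge 0$ on $Q_1'$, and Kato plus the favorable sign of the reflection single-layer gives that $\widetilde{|w_h|}$ is a subsolution. The paper instead shows that each of $(U_h)^\pm$ is a subsolution by the case analysis ``if $U_h>0$ at a point then $u>0$ nearby, so the equation holds exactly at time $t$ and is an inequality at time $t-h$''; the two arguments are essentially equivalent ($|w_h|=U_h^+ + U_h^-$), yours being slightly more symmetric.

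The place where you depart substantially from the paper --- and where the real work in your write-up lies --- is the uniform-in-$h$ $L_2$ bound on $w_h$. In the paper this is immediate: the interior $W^{2,1}_2$ regularity of the parabolic Signorini solution, i.e.\ $\|\partial_t u\|_{L_2(Q_\rho^+)}\le C(\|u\|_{L_2(Q_1^+)}+\|f\|_{L_2(Q_1^+)})$, is a known result (Arkhipova--Uraltseva, \cite[Lemma~6]{AU2}; also \cite{DGPT} in the Gaussian-weighted setting), and the paper simply cites it as Lemma~\ref{lem:W22} and observes that $\|U_h\|_{L_2(Q_{3/4})}\le 2\|\partial_t u\|_{L_2(Q_{5/6}^+)}$. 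You were apparently unaware of this estimate and invented a workaround: a negative-Sobolev-norm bound on $\partial_t v=\Delta v$ coming from the known $H^{3/2,3/4}$ regularity, interpolated against the $H^1$ norm, fed into a Caccioppoli inequality with a sublinear right-hand side and absorbed by a nested-cylinder iteration. The idea is plausible and the exponent bookkeeping checks out ($\theta=\tfrac{1-s}{1+s}>0$ for any $s<1$), but it is considerably heavier than it needs to be, and several steps would require more justification than you give (the precise form of the local $H^{-s}$ bound up to the thin boundary, the interpolation inequality on a half-ball, a priori finiteness of the quantity being iterated, and the absorption lemma itself). If you are willing to use the $W^{2,1}_2$ estimate as a black box, the whole paragraph collapses to one line and the proof becomes much cleaner.

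One small slip: you write $\|F_h\|_{L_2(Q_1^+)}\le\|\partial_t F\|_{L_2(Q_1^+)}$ and then feed an ``$L_2$ forcing'' into the De Giorgi--Nash--Moser local maximum principle. An $L_2$ right-hand side is not enough for a local $L_\infty$ bound in dimensions $n\ge 3$; you need $L_q$ with $q>(n+2)/2$. Fortunately, under the hypothesis $\phi\in H^{4,2}(Q_1')$ the function $\partial_t f=\partial_t^2\phi-\partial_t\Delta_{x'}\phi$ is actually bounded, so $\|F_h\|_{L_\infty(Q_1^+)}\le\|\partial_t f\|_{L_\infty(Q_1^+)}\le C\|\phi\|_{H^{4,2}(Q_1')}$ uniformly in $h$, and the local maximum principle applies. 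This is also what the paper uses. So the slip is harmless but should be corrected.
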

We prove this theorem in \S\ref{sec:bound-time-deriv}. In fact,
instead of asking $\phi\in H^{4,2}(Q_1')$ it is sufficient to assume
that $\partial_t(\Delta_{x'}\phi-\partial_t\phi)\in L_\infty(Q_1')$.

\medskip Our second result is that $\partial_t v$ is continuous at
so-called regular free boundary points (see \S\ref{sec:hold-cont-time}
for the definition).

\begin{theorem}\label{thm:vt-cont-reg} Let $v$ be as in
  Theorem~\ref{thm:vt-bdd}. Then $\partial_t v$ continuously equals to
  $\partial_t \phi$ at regular free boundary points.
\end{theorem}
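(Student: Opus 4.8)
The plan is to prove H\"older continuity of $\partial_t v$ at regular free boundary points by combining the already established boundedness of $\partial_t v$ (Theorem~\ref{thm:vt-bdd}) with the fine structure of the free boundary near regular points coming from \cite{DGPT}. Recall that at a regular free boundary point---after the usual normalization placing the point at the origin---the blowup of $v-\phi$ is (a multiple of) the model solution $\Re(x_{n-1}+ix_n)^{3/2}$, and that in a whole parabolic neighborhood of such a point the free boundary $\Gamma(v)$ is a $C^{1,\alpha}$ (in space--time) graph, the coincidence set $\{v=\phi\}\cap Q_r'$ having nonempty interior on one side. The key observation is that on the interior of the coincidence set one has $v\equiv\phi$, hence $\partial_t v=\partial_t\phi$ there exactly; so the content of the theorem is that the values of $\partial_t v$ approaching the free boundary from the \emph{non-contact} side (where $\partial_{x_n}v=0$) match $\partial_t\phi$ continuously across $\Gamma(v)$.

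The natural route is to differentiate the problem in $t$. Set $w:=\partial_t(v-\phi)$; by Theorem~\ref{thm:vt-bdd} (and the assumption $\phi\in H^{4,2}$) the function $w$ is bounded, and it is caloric in $Q_1^+$. On the non-contact part of $Q_1'$ we have $\partial_{x_n}v=0$, and differentiating in $t$ gives $\partial_{x_n}w=-\partial_{x_n}\partial_t\phi$, which after subtracting a suitable correction is a homogeneous Neumann condition; on the interior of the contact set $v=\phi$ forces $w=0$. Thus $w$ (suitably corrected by a $H^{4,2}$ function built from $\phi$) solves a \emph{Signorini-type} or at least a mixed Dirichlet--Neumann problem in a neighborhood of the origin, with the partition of $Q_r'$ into its Dirichlet and Neumann parts governed by the $C^{1,\alpha}$ free boundary. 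I would then invoke the boundary regularity theory for such mixed problems: since the dividing interface $\Gamma(v)$ is $C^{1,\alpha}$ in space and time, a flattening change of variables reduces matters to a mixed problem on a half-ball with a flat interface, whose solutions are known to be $C^{\gamma,\gamma/2}$ up to the interface for some $\gamma>0$. Evaluating at the origin and reversing the normalization then yields that $w$ has a limit at the regular point, and comparing with the contact side (where $w=0$) pins the limit down, giving $\partial_t v\to\partial_t\phi$.

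Alternatively---and this may be cleaner given the monotonicity-formula machinery already in \cite{DGPT}---one can argue by a blowup/compactness scheme directly on $\partial_t v-\partial_t\phi$: rescale parabolically around the regular point, use the uniform bound from Theorem~\ref{thm:vt-bdd} to extract a limit, identify the limit using the known blowup of $v$ itself (the $3/2$-homogeneous model, whose time derivative is zero) together with the caloric/Neumann conditions inherited in the limit, and conclude the limit is identically $0$; an iteration of this at dyadic scales, controlled by the $C^{1,\alpha}$ rate of flattening of $\Gamma(v)$, upgrades ``limit is $0$'' to a H\"older modulus of continuity. The main obstacle in either approach is the same: one must transfer the \emph{quantitative} $C^{1,\alpha}$ control of the free boundary near a regular point into a \emph{quantitative} (H\"older) rate for $w$, rather than merely a qualitative limit. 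Concretely this means carefully tracking how the error in treating $\Gamma(v)$ as flat enters the boundary regularity estimate for the mixed problem, and ensuring the correction functions manufactured from $\phi\in H^{4,2}(Q_1')$ are regular enough (they are, by the embedding $H^{4,2}\hookrightarrow C^{2,\alpha}$ in the relevant range) not to spoil the estimate. The non-tangential nature of the convergence---we only claim the value is attained as a boundary limit, from the region $Q_1^+\cup Q_1'$---should make the flattening step and the application of Schauder-type boundary estimates for oblique/mixed problems go through without further complication.
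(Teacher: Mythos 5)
Your qualitative picture is right: the issue is to show that $\partial_t(v-\phi)$, which vanishes identically on the interior of the coincidence set, also vanishes continuously as the free boundary is approached from the non-contact side. But both of your proposed routes rest on a $C^{1,\alpha}$ description of the free boundary \emph{in space and time} near a regular point, and this is circular: the time-direction $C^{1,\alpha}$ regularity of $\Gamma(v)$ is exactly Corollary~\ref{cor:C1a-xt} of the paper, which is deduced \emph{from} Theorem~\ref{thm:vt-cont-reg}, because the boundary Harnack argument of \cite[Theorem~11.6]{DGPT} can only be run for $\partial_t u$ once one knows that it vanishes continuously on the coincidence set. Before the theorem is proved, Proposition~\ref{prop:signor-known} provides only that $\Gamma(v)$ near a regular point is a parabolically \emph{Lipschitz} graph $x_{n-1}=g(x'',t)$, and that is not enough to flatten and invoke Schauder-type estimates for a mixed Dirichlet--Neumann problem as in your first route, nor to supply the ``$C^{1,\alpha}$ rate of flattening'' your second route uses to turn a compactness limit into a H\"older modulus.

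The paper's proof sidesteps all of this. It works with incremental quotients $U_h=(u(\cdot,t)-u(\cdot,t-h))/h$ rather than differentiating formally in $t$ (so no boundary conditions for $\partial_t u$ need to be asserted on a free set), and the crucial structural fact, Lemma~\ref{lem:key-obs}, is that the positive and negative parts $U_h^\pm$ are subcaloric with bounded right-hand side throughout $Q_{3/4}$, not just off the coincidence set. Since $U_h$ also vanishes on the slightly shrunk coincidence set $\Lambda_h$, which is a parabolically Lipschitz subgraph and therefore has positive thermal capacity at the free boundary point at every scale, a barrier comparison in parabolic slit domains as in \cite[Lemma~3.2]{PS} yields a H\"older decay estimate for $U_h^\pm$ near the regular point that is uniform in $h$; sending $h\to 0$ finishes the proof. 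No $C^{1,\alpha}$ information on $\Gamma(v)$, no flattening, and no regularity of the Dirichlet--Neumann interface is used --- only a capacity condition that the parabolic Lipschitz regularity of $g$ already guarantees. If you want to rescue something like your second route, the correct replacement for the ``$C^{1,\alpha}$ rate of flattening'' is this uniform thermal capacity of $\Lambda_h$ at all scales, combined with the subsolution property of $U_h^\pm$.
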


In fact, in \S\ref{sec:hold-cont-time} we prove a more precise version
of this theorem (Theorem~\ref{thm:ut-Holder-local}), which shows the
H\"older continuity of $\partial_t v$ at regular points.

At the end of the paper we state a direct corollary on the higher
regularity of the free boundary in the $t$ variable near regular
points (see Corollary~\ref{cor:C1a-xt}). When the thin obstacle
$\phi\equiv 0$, Theorem~\ref{thm:vt-cont-reg} can be used to make an
iterative step in the application of a higher-order boundary Harnack
principle for parabolic slit domains and establish the $C^\infty$
regularity (both in $x$ and $t$) of the free boundary near regular
points (see \cite{BSZ}).

\subsection*{Notation} Throughout the paper we use the following
conventions and notations.
\begin{itemize}
\item $\R^n$ stands for the $n$-dimensional Euclidean space. For
  $x=(x_1,\ldots,x_n)\in\R^n$, we typically denote
  $x'=(x_1,\ldots,x_{n-1})$ and $x''=(x_1,\ldots,x_{n-2})$. We also
  routinely identify $x'\in\R^{n-1}$ with
  $(x',0)\in\R^{n-1}\times\{0\}\subset\R^n$.
\item $B_r(x_0)$, $B_r'(x_0)$, $B_r''(x_0)$ stand for balls of radius
  $r>0$ centered at $x_0$ in $\R^n$, $\R^{n-1}$, $\R^{n-2}$,
  respectively. We drop the center from the notation if $x_0=0$. We
  also denote $B_r^\pm(x_0)=B_r(x_0)\cap\{\pm x_n>0\}$.
\item $Q_r(x_0,t_0)=B_r(x_0)\times(t_0-r^2,t_0]$ is the parabolic
  cylinder, with similar definitions for $Q'_r$, $Q''_r$, $Q^\pm_r$.

\item For parabolic functional spaces, we use notations similar to
  those in \cite{LSU} and \cite[\S2.2]{DGPT}. In particular,
  $H^{\ell,\ell/2}(E)$ for $\ell=m+\gamma$, $m\in\N\cup\{0\}$,
  $\gamma\in (0,1]$ is the space of functions such that the partial
  derivatives $\partial_x^\alpha\partial_t^j u$ are $\gamma$-H\"older
  in $x$ and $\gamma/2$-H\"older in $t$ for the derivatives of the
  parabolic order $|\alpha|+2j\leq m$ and $(1+\gamma)/2$-H\"older in
  $t$ if $|\alpha|+2j\leq m-1$. $L_p(E)$ stands for the Lebesgue
  space, and $W^{2m,m}_p(E)$ is the Sobolev space of functions such
  that $\partial_x^\alpha\partial_t^j u\in L_p(E)$ for
  $|\alpha|+2j\leq 2m$.
\end{itemize}

\section{Boundedness of the time derivative}
\label{sec:bound-time-deriv}

We first reduce the problem to the case of zero thin obstacle, at the
expense of getting nonzero right hand side in the governing
equation. Namely, let
$$
u(x,t):=v(x,t)-\phi(x',t).
$$
Then we have
\begin{align}
  \label{eq:sig-u1}
  \Delta u-\partial_t u =f:=\partial_t\phi-\Delta_{x'}\phi&\quad\text{in }Q_1^+,\\
  \label{eq:sig-u2}
  u\geq 0,\quad -\partial_{x_n} u\geq 0,\quad u\partial_{x_n} u=0&\quad\text{on }Q_1'.
\end{align}
It will also be convenient to extend the function $u$ by the even
symmetry in the $x_n$ variable to the entire cylinder $Q_1$:
$$
u(x',-x_n,t)=u(x',x_n,t).
$$
Then the extended function will satisfy
$$
\Delta u-\partial_t
u=f+2(\partial_{x_n}^+u)\mathcal{H}^{n}\big|_{\Lambda(u)}\quad\text{in
}Q_1,
$$
in the sense of distributions, where $f$ is also extended by the even
symmetry in $x_n$ to all of $Q_1$,
$\partial_{x_n}^+u(x',0,t)=\partial_{x_n}u(x',0+,t)$ for
$(x',t)\in Q_1'$, $\mathcal{H}^n$ is the $n$-dimensional Hausdorff
measure, and
\begin{align*}
  \Lambda(u):=&\{(x',t)\in Q_1':u(x',0,t)=0\}\\
  =&\{(x',t)\in Q_1':v(x',0,t)=\phi(x',t)\}
\end{align*}
is the so-called \emph{coincidence set}.

\begin{proof}[Proof of Theorem~\ref{thm:vt-bdd}] For $u$ solving
  \eqref{eq:sig-u1}--\eqref{eq:sig-u2} and a small $h>0$ consider the
  incremental quotient in the time variable
$$
U_h(x,t)=\frac{u(x,t)-u(x,t-h)}{h},\quad (x,t)\in
Q_{3/4}.
$$
Let us also denote
$$
F_h(x,t)=\frac{f(x,t)-f(x,t-h)}{h},\quad (x,t)\in
Q_{3/4}.
$$
Note that $U_h\in H^{3/2,3/4}(Q_{3/4}^\pm\cup Q_{3/4}')$ and
$F_h\in H^{2,1}(Q_{3/4})$, from the assumption that the thin obstacle
$\phi\in H^{4,2}(Q_1')$.

We then have the following key observation.
\begin{lemma}\label{lem:key-obs} The positive and negative parts of
  $U_h$,
$$
U_h^\pm:=\max\{\pm U_h, 0\},
$$
satisfy
$$
(\Delta-\partial_t)(U_h^\pm)\geq - F_h^-\quad\text{in }Q_{3/4}.
$$
\end{lemma}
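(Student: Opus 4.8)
The plan is to work with the obstacle-problem structure of $U_h$ and use the fact that the incremental quotient of a Signorini solution still satisfies a one-sided constraint on the thin set. First I would record what equation $U_h$ itself satisfies: away from the thin space, $(\Delta-\partial_t)U_h = F_h$ in $Q_{3/4}^\pm$, since $u$ solves \eqref{eq:sig-u1} in $Q_1^+$ and we extended evenly in $x_n$. On the thin set $Q_{3/4}'$, the Signorini conditions \eqref{eq:sig-u2} for $u(\cdot,t)$ and $u(\cdot,t-h)$ translate into a sign condition for $U_h$: at a point $(x',0,t)$ where $U_h(x',0,t)>0$ we have $u(x',0,t)>0$, hence (by complementarity at time $t$) $\partial_{x_n}^+u(x',0,t)=0$, while $\partial_{x_n}^+u(x',0,t-h)\le 0$; therefore $\partial_{x_n}^+U_h(x',0,t)\ge 0$. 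Symmetrically, where $U_h(x',0,t)<0$ we get $\partial_{x_n}^+U_h(x',0,t)\le 0$. In other words $U_h$ itself satisfies a Signorini-type sign relation on $Q_{3/4}'$, and $U_h^+$ is superharmonic-in-the-parabolic-sense across the thin set up to the error $F_h$.

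Next I would prove the inequality for $U_h^+$ (the case of $U_h^-$ being identical after replacing $u$ by $-u$ in the argument above, or rather by symmetry of the roles of the two time slices). The cleanest route is a distributional/test-function computation. Take $0\le\eta\in C_0^\infty(Q_{3/4})$. Using the even extension, $\langle(\Delta-\partial_t)U_h,\eta\rangle$ splits into a solid part equal to $\int F_h\,\eta$ and a thin-set part equal to $2\int_{Q_{3/4}'}(\partial_{x_n}^+U_h)\,\eta$. Now write $\eta = \eta\cdot\mathbf 1_{\{U_h>0\}} + \eta\cdot\mathbf 1_{\{U_h\le 0\}}$ and restrict test functions to be supported where $U_h>0$: since $\partial_{x_n}^+U_h\ge 0$ on $\{U_h(\cdot,0,\cdot)>0\}\cap Q_{3/4}'$ as shown above, the thin contribution is $\ge 0$ there, so on the open set $\{U_h>0\}$ we get $(\Delta-\partial_t)U_h\ge F_h\ge -F_h^-$ in the distributional sense. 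On the complement $U_h^+\equiv 0$ and $(\Delta-\partial_t)0 = 0\ge -F_h^-$ trivially. The standard gluing fact — that $\max\{w,0\}$ of a (weak sub/super)solution picks up a nonnegative singular measure on the free boundary $\partial\{U_h>0\}$, never a negative one — then yields $(\Delta-\partial_t)U_h^+ \ge -F_h^-$ in all of $Q_{3/4}$; concretely, for $0\le\eta\in C_0^\infty(Q_{3/4})$ one approximates $U_h^+$ by $\beta_\varepsilon(U_h)$ with smooth nondecreasing convex $\beta_\varepsilon$, computes $(\Delta-\partial_t)\beta_\varepsilon(U_h) = \beta_\varepsilon'(U_h)(\Delta-\partial_t)U_h + \beta_\varepsilon''(U_h)|\nabla U_h|^2 \ge \beta_\varepsilon'(U_h)(\Delta-\partial_t)U_h$, and passes to the limit using $\beta_\varepsilon'\to\mathbf 1_{\{U_h>0\}}$.

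The main obstacle is making the distributional/measure-theoretic bookkeeping on the thin set rigorous, since $U_h$ is only $H^{3/2,3/4}$ up to the thin space and the natural equation for it lives in the sense of distributions with a singular term supported on $Q_{3/4}'$. In particular one must justify that the thin-set contribution of $\beta_\varepsilon'(U_h)(\Delta-\partial_t)U_h$ does not generate a bad sign: the trace $U_h(\cdot,0,\cdot)$ is continuous (by the $H^{3/2,3/4}$ regularity restricted to the thin space), so $\{U_h(\cdot,0,\cdot)>0\}$ is relatively open in $Q_{3/4}'$ and the sign condition $\partial_{x_n}^+U_h\ge 0$ holds on it pointwise a.e., which is exactly what is needed for the thin measure to be nonnegative against $\beta_\varepsilon'(U_h)\eta$. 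Once that point is handled, the solid part contributes exactly $\beta_\varepsilon'(U_h)F_h \ge -F_h^-$ in the limit, and the $\beta_\varepsilon''$ term drops since it is nonnegative; assembling the two pieces gives the claimed inequality. I would also remark that only the sign of $F_h$ where $U_h^+>0$ matters, which is why the right-hand side is $-F_h^-$ rather than $F_h$ itself — this slack is what will later allow the comparison/barrier argument bounding $\partial_t v$.
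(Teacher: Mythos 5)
Your proof is correct and follows essentially the same route as the paper: both hinge on the observation that where $U_h>0$ one has $u(\cdot,t)>0$, so the equation holds at time $t$ while the (even-extended) supersolution inequality holds at time $t-h$, giving $(\Delta-\partial_t)U_h\ge F_h\ge -F_h^-$ on $\{U_h>0\}$, after which a standard truncation argument carries the inequality over to $U_h^+$. The only cosmetic differences are that you make the thin-set defect measure $2\,\partial_{x_n}^+U_h\,d\mathcal{H}^n$ explicit (the paper instead subtracts the two differential inequalities for $u$ directly) and you approximate $U_h^+$ by convex $\beta_\varepsilon(U_h)$ whereas the paper multiplies the test function by a cutoff $\chi(U_h/\varepsilon)$ — these are equivalent devices.
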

\begin{proof} It is clear that the inequality is satisfied in
  $Q_{3/4}^\pm$, so we will need to show the inequality near
  $(x_0,t_0)\in Q_{3/4}'$.  Suppose first that $U_h(x_0,t_0)>0$. Then,
  necessarily $u(x_0,t_0)>0$ and therefore
$$
(\Delta-\partial_t)u(x,t)=f(x,t)\quad\text{in }Q_\delta(x_0,t_0),
$$
for some small $\delta>0$. On the other hand,
$$
(\Delta-\partial_t)u(x,t-h)\leq f(x,t-h)\quad\text{in
}Q_\delta(x_0,t_0),
$$
in the sense of distributions, and taking the difference, we obtain
$$
(\Delta-\partial_t) U_h\geq F_h\quad\text{in }Q_\delta(x_0,t_0).
$$
We thus have
$$
(\Delta-\partial_t) U_h\geq -F_h^-\quad\text{in }\{U_h>0\}\cap
Q_{3/4}
$$
and a standard argument now implies that
$$
(\Delta-\partial_t)U_h^+\geq -F_h^-\quad\text{in
}Q_{3/4}.
$$
Indeed, for nonnegative $\eta\in C^\infty_0(Q_{3/4})$ and $\epsilon>0$
let
$$
\eta_\epsilon=\eta\,\chi(U_h/\epsilon),\quad\text{where }\chi\in
C^\infty(\R),\quad \chi\big|_{(-\infty,1]}=0,\quad
\chi\big|_{[2,\infty)}=1,\quad \chi'\geq 0.
$$
Since $U_h$ is continuous, $\eta_\epsilon$ is supported in $\{U_h>0\}$
and hence
$$
\iint_{Q_{3/4}}(\nabla U_h\nabla \eta_\epsilon+\partial_t U_h
\eta_\epsilon)\leq \iint_{Q_{3/4}} F_h^-\eta_\epsilon\leq
\iint_{Q_{3/4}} F_h^-\eta.
$$
On the other hand,
\begin{align*}
  \iint_{Q_{3/4}}\nabla U_h\nabla \eta_\epsilon&=\iint_{Q_{3/4}}(\nabla
                                                 U_h\nabla\eta)\chi(U_h/\epsilon)+\eta\frac1\epsilon\chi'(U_h/\epsilon)|\nabla
                                                 U_h|^2\\
                                               &\geq \iint_{Q_{3/4}}(\nabla
                                                 U_h\nabla\eta)\chi(U_h/\epsilon).
\end{align*}
Passing to the limit as $\epsilon\to 0+$, using the Dominated
Convergence Theorem, we then conclude
$$
\iint_{Q_{3/4}} (\nabla U_h\nabla\eta+\partial_t
U_h\eta)\chi_{\{U_h>0\}}\leq \iint_{Q_{3/4}} F_h^-\eta,
$$
which can be rewritten as
$$
\iint_{Q_{3/4}} (\nabla U_h^+\nabla \eta+\partial_t U_h^+\eta)\leq
\iint_{Q_{3/4}} F_h^-\eta.
$$
The proof for $U_h^-$ is similar.
\end{proof}

We will also need the following known estimate.
\begin{lemma}\label{lem:W22} Let $u$ be a weak solution of
  \textup{\eqref{eq:sig-u1}--\eqref{eq:sig-u2}}. Then
  $u\in W^{2,1}_2(Q_{\rho}^+)$ for any $\rho<1$ with
$$
\|D^2 u\|_{L_2(Q_\rho^+)}+\|\partial_t u\|_{L_2(Q_\rho^+)}\leq
C_{\rho,n} \left(\|u\|_{L_2(Q_1^+)}+\|f\|_{L_2(Q_1^+)}\right).
$$
\end{lemma}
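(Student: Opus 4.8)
The plan is to establish the interior $W^{2,1}_2$ estimate for the Signorini solution $u$ by a combination of a Caccioppoli-type energy inequality and the Steklov (difference-quotient) method in time, exploiting the even symmetry of $u$ across $\{x_n=0\}$ to avoid dealing with the free boundary directly. First I would fix a cutoff $\zeta\in C^\infty_0(Q_1)$ with $\zeta\equiv 1$ on $Q_\rho$, $0\le\zeta\le 1$, even in $x_n$. The key structural fact is the complementarity condition: on $Q_1'$ we have $u\,\partial_{x_n}u=0$ with $u\ge 0$ and $-\partial_{x_n}u\ge 0$, so the boundary term $\iint_{Q_1'} (\partial_{x_n}^+u)\,u\,\zeta^2 = 0$ whenever it would otherwise appear. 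Equivalently, in the symmetrized formulation $\Delta u-\partial_t u = f+2(\partial_{x_n}^+u)\mathcal H^n|_{\Lambda(u)}$, testing against $u\zeta^2$ kills the singular measure because $u=0$ on $\Lambda(u)$.

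The first step is the energy (Caccioppoli) inequality: test the equation for $u$ (in the even-extended, distributional form) against $\varphi = u\,\zeta^2$. The singular boundary term vanishes as noted; integrating by parts and using $2\nabla u\cdot\nabla(u\zeta^2) = \nabla(|u|^2)\cdot\nabla(\zeta^2) + 2|\nabla u|^2\zeta^2$ together with $\partial_t u\cdot u\zeta^2 = \tfrac12\partial_t(u^2)\zeta^2$, one absorbs the gradient term and controls $\sup_t\int u^2\zeta^2 + \iint|\nabla u|^2\zeta^2$ by $\iint(u^2+f^2)$ on the larger cylinder, using $\zeta(\cdot,-1)$ support and Young's inequality on the $\nabla u\cdot\nabla\zeta$ cross term. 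This gives the $H^1$-in-space, $L^\infty$-in-time bound on $u\zeta$.

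The second and main step is to upgrade to second spatial derivatives and one time derivative. Here I would differentiate in the tangential directions $x_i$, $i=1,\dots,n-1$, which preserve the thin manifold $\{x_n=0\}$: formally $w = \partial_{x_i}u$ satisfies $\Delta w-\partial_t w = \partial_{x_i}f$ in $Q_1^\pm$, and on $Q_1'$ one has the obstacle-type structure inherited from differentiating the complementarity relations tangentially. Rigorously this is done with difference quotients $\delta_i^s u = (u(\cdot+se_i)-u)/s$, which remain admissible competitors since the Signorini constraint on $Q_1'$ is preserved under tangential translation; testing the equation for $\delta_i^s u$ against $(\delta_i^s u)\zeta^2$ — again the boundary term has a sign or vanishes by the complementarity structure — yields a uniform-in-$s$ bound on $\iint|\nabla\delta_i^s u|^2\zeta^2$, hence $\partial_{x_i}\partial_{x_j}u\in L_2(Q_\rho^+)$ for $i\le n-1$, all $j$. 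The purely normal second derivative $\partial_{x_n x_n}u$ is then recovered algebraically from the equation $\partial_{x_n x_n}u = \partial_t u - \sum_{j<n}\partial_{x_j x_j}u + f$, but this requires knowing $\partial_t u\in L_2$; so one instead runs the difference quotient argument in $t$ as well (Steklov averages $u^h$), noting that $U_h$ as defined in the paper satisfies the one-sided relations of Lemma~\ref{lem:key-obs}, to bound $\|\partial_t u\|_{L_2(Q_\rho^+)}$, and then closes the loop for $\partial_{x_nx_n}u$.

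The main obstacle is the rigorous justification that the difference-quotient competitors are admissible and that the unfavorable boundary contributions have the right sign — i.e. that translating or time-shifting does not destroy the Signorini conditions and that the resulting integral inequalities have no uncontrolled terms on $Q_1'$. This is exactly the point where one uses $u\ge 0$, $-\partial_{x_n}u\ge 0$, and $u\,\partial_{x_n}u=0$: for tangential increments the set $\{u>0\}$ moves but the measure $(\partial_{x_n}^+u)\mathcal H^n|_{\Lambda(u)}$ is still supported where $u=0$, so pairing with $u^h$ or $\delta_i^s u$ multiplied by an even cutoff produces terms that either vanish or come with a favorable sign, after an approximation argument like the $\chi(U_h/\epsilon)$ device already used in the proof of Lemma~\ref{lem:key-obs}. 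Once this bookkeeping is in place, the estimate follows by iterating over shrinking cylinders to reach the stated radius $\rho$, with the constant $C_{\rho,n}$ absorbing the cutoff norms; this is a known result (it appears in \cite{DGPT}), so I would cite it there rather than reproduce all details.
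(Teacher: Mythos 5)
The paper does not prove this lemma; it simply cites \cite[Lemma~6]{AU2} and \cite{DGPT}, and you do the same at the end, so at that level you match the paper. But you also offer a sketch, and there is a genuine gap in the step where you bound $\partial_t u$.

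Your tangential difference-quotient argument is fine: testing $(\Delta-\partial_t)\delta_i^s u=\delta_i^s f+\delta_i^s\mu$ against $(\delta_i^s u)\zeta^2$, the singular term $\int(\delta_i^s\mu)(\delta_i^s u)\zeta^2\geq0$ has the favorable sign by complementarity, so one gets $\partial_{x_ix_j}u\in L_2$ for $i\leq n-1$. The problem is what comes next. You propose to bound $\|\partial_t u\|_{L_2}$ by ``noting that $U_h$ satisfies the one-sided relations of Lemma~\ref{lem:key-obs}.'' That subsolution inequality $(\Delta-\partial_t)U_h^\pm\geq-F_h^-$ gives, via Caccioppoli, only a bound of $\iint|\nabla U_h^\pm|^2\zeta^2$ in terms of $\iint(U_h^\pm)^2$ and $F_h$; it does not give the $L_2$ bound on $U_h$ itself, which is exactly the quantity you are trying to control. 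In the same vein, the $L_\infty$-$L_2$ estimate used in the proof of Theorem~\ref{thm:vt-bdd} bounds $\|U_h\|_{L_\infty}$ \emph{by} $\|U_h\|_{L_2}$, so Lemma~\ref{lem:key-obs} is the wrong tool to produce an $L_2$ bound here; the logic is circular. The correct device (used in \cite{AU2,DGPT}) is to test the \emph{original} equation $\Delta u-\partial_t u=f+\mu$ against $U_h\zeta^2$ rather than the differenced equation against $U_h\zeta^2$. Then the singular term $\int\mu\,U_h\,\zeta^2\geq0$ again has a sign because $u(\cdot,t)=0$ on $\operatorname{supp}\mu_t$ forces $U_h=-u(\cdot,t-h)/h\leq0$ there, while the Laplacian term produces (after Young's inequality and a change of variable in $t$) $-\tfrac12\partial_t\iint|\nabla u|^2\zeta^2$ plus terms controlled by the first-step energy estimate; the cross terms with $\nabla\zeta$ and $f$ absorb a small multiple of $\iint U_h^2\zeta^2$ into the left-hand side. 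This closes the estimate on $\|\partial_t u\|_{L_2}$ without circularity, and only then does the algebraic identity $\partial_{x_nx_n}u=\partial_t u+f-\Delta' u$ yield the missing normal second derivative.
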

The proof can be found in \cite[Lemma~6]{AU2}, and in the
Gaussian-weighted case in \cite{DGPT}.

Going back to the proof of Theorem~\ref{thm:vt-bdd}, we can now use
the interior $L_\infty$-$L_2$ estimates for subsolutions (see
\cite[Theorem~6.17]{Lie}) to write
$$
\|U_h^\pm\|_{L_\infty(Q_{1/2})}\leq
C_n\left(\|U_h\|_{L_2(Q_{3/4})}+\|F_h^-\|_{L_\infty(Q_{3/4})}\right).
$$
On the other hand, since
$$
U_h(x,t)=\frac{1}{h}\int_{t-h}^t \partial_t u(x,s)ds,
$$
we obtain that
\begin{align*}
  \|U_h\|_{L_2(Q_{3/4})}&\leq 2 \|U_h\|_{L_2(Q_{3/4}^+)}\leq 2\|\partial_t u\|_{L_2(Q_{5/6}^+)}\\
                        &\leq C_n\left(\|u\|_{L_2(Q_1^+)}+\|f\|_{L_2(Q_1^+)}\right),
\end{align*}
where in the last inequality we have applied Lemma~\ref{lem:W22}. It
is also clear that
$$
\|F_h\|_{L_\infty(Q_{3/4})}\leq \|\partial_t f\|_{L_\infty(Q_1^+)}.
$$
Letting now $h\to 0$, we then obtain the estimate
$$
\|\partial_t u\|_{L_\infty(Q_{1/2})}\leq
C_n\left(\|u\|_{L_2(Q_1^+)}+\|f\|_{L_2(Q_1^+)}+ \|\partial_t
  f\|_{L_\infty(Q_1^+)}\right),
$$
which readily implies the statement of Theorem~\ref{thm:vt-bdd}.
\end{proof}

\section{H\"older continuity of the time derivative at regular points}
\label{sec:hold-cont-time}

In formulation \eqref{eq:sig-u1}--\eqref{eq:sig-u2}, the free boundary
is given by
$$
\Gamma(u)=\partial_{Q_1'}\{(x',t)\in Q_1': u(x',0,t)>0\}.
$$
As shown in \cite{DGPT}, a successful study of the properties of the
free boundary near $(x_0,t_0)\in\Gamma(u)$ can be made by considering
the rescalings
$$
u_r(x,t)=u_r^{(x_0,t_0)}(x,t):=\frac{u(x_0+rx,t_0+r^2t)}{(H_u^{(x_0,t_0)}(r))^{1/2}},
$$
for $r>0$, and then studying the limits of $u_r$ as $r=r_j\to 0+$
(so-called \emph{blowups}). Here
$$
H_u^{(x_0,t_0)}(r):=\frac1{r^2}\int_{t_0-r^2}^{t_0}\int_{\R^n}
u^2(x,t)\psi^2(x) G(x_0-x,t_0-t) dxdt,
$$
where $\psi$ is a cutoff function, which is supported in $B_1$ and
equals $1$ in a neighborhood of $x_0$, and
$$
G(x,t)=\begin{cases}\frac{1}{(4\pi t)^{n/2}} e^{-|x|^2/4t}, &t>0,\\
  0, &t\leq 0
\end{cases}
$$
is the heat kernel.  Then a free boundary point
$(x_0,t_0)\in \Gamma(u)$ is called \emph{regular} if $u_r$ converges
in the appropriate sense to
$$
u_0(x,t)=c_n\Re(x_{n-1}+i |x_{n}|)^{3/2},
$$
as $r=r_j\to 0+$, after a possible rotation of coordinate axes in
$\R^{n-1}$. Note that this does not depend on the choice of the cutoff
function $\psi$ above. See \cite{DGPT} for more details and for a
finer classification of free boundary points based on a generalization
of Almgren's and Poon's frequency formulas.

Thus, let $\cR(u)$ be the set of regular free boundary points of $u$,
also known as the \emph{regular set} of the solution $u$. The
following result has been proved in \cite{DGPT}.

\begin{proposition}\label{prop:signor-known} Let $u\in
  H^{3/2,3/4}(Q_1^+\cup Q_1')$
  be a solution of the parabolic Signorini problem
  \textup{\eqref{eq:sig-u1}--\eqref{eq:sig-u2}} with
  $f\in H^{1,1/2}(Q_1^+\cup Q_1')$. Then the regular set $\cR(u)$ is a
  relatively open subset of $\Gamma(u)$. Moreover, if
  $(x_0,t_0)\in \cR(u)$, then there exists $\rho=\rho_u(x_0,t_0)>0$
  and a parabolically Lipschitz function $g:Q_\rho''(x_0'',t_0)\to\R$
  such that
  \begin{align*}
    \Gamma(u)\cap Q_\rho'(x_0,t_0)=\cR(u)\cap
    Q_\rho'(x_0,t_0)&=\{x_{n-1}=g(x'',t),x_n=0\}\cap Q_\rho'(x_0,t_0),\\
    \Lambda(u)\cap Q_\rho'(x_0,t_0)&=\{x_{n-1}\leq g(x'',t),x_n=0\}\cap Q_\rho'(x_0,t_0).
  \end{align*}
\end{proposition}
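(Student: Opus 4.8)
The plan is to follow the Almgren--Poon frequency approach developed in \cite{DGPT}. First I would introduce the Gaussian-weighted frequency functional $N_u^{(x_0,t_0)}(r)$, a ratio of a Dirichlet-type energy against $H_u^{(x_0,t_0)}(r)$, and prove its almost-monotonicity: the corrected quantity $r\mapsto N_u^{(x_0,t_0)}(r)+Cr^{\alpha}$ is nondecreasing for some $\alpha>0$, where the error term absorbs the contribution of the right-hand side $f\in H^{1,1/2}(Q_1^+\cup Q_1')$. This produces a well-defined limit $\kappa(x_0,t_0):=N_u^{(x_0,t_0)}(0+)$, and a compactness argument (using Lemma~\ref{lem:W22} and the $H^{3/2,3/4}$ regularity) shows that along suitable sequences $r=r_j\to 0+$ the rescalings $u_r^{(x_0,t_0)}$ converge to a nonzero global solution that is parabolically homogeneous of degree $\kappa$. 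The classification of such homogeneous global solutions, also carried out in \cite{DGPT}, gives that $\kappa\ge 3/2$ at every point of $\Gamma(u)$, with the dichotomy that either $\kappa=3/2$ --- in which case the blowup is, after a rotation in $\R^{n-1}$, exactly $u_0(x,t)=c_n\Re(x_{n-1}+i|x_n|)^{3/2}$ --- or $\kappa\ge 2$.

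Next I would deduce that $\cR(u)$ is relatively open in $\Gamma(u)$. The main ingredient is upper semicontinuity of $(x_0,t_0)\mapsto\kappa(x_0,t_0)$ on $\Gamma(u)$, which follows from the almost-monotonicity together with the joint continuity of $(x_0,t_0,r)\mapsto N_u^{(x_0,t_0)}(r)$ for $r>0$. Combined with the frequency gap between $3/2$ and $2$ and the universal lower bound $\kappa\ge 3/2$, any free boundary point sufficiently close to a regular point has $\kappa<2$, hence $\kappa=3/2$, hence is regular.

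Then comes the geometric description, and this is where the main work lies. At a fixed regular point I would first prove uniqueness of the blowup by a Monneau-type monotonicity formula: for the degree-$3/2$ homogeneous competitor $u_0$ the functional $r\mapsto M_u(r):=r^{-3}\iint (u_r-u_0)^2\psi^2 G$, suitably corrected for $f$, is almost monotone, which forces a single blowup and hence a well-defined tangent direction $e=e(x_0,t_0)\in\R^{n-1}$; by the same compactness argument $e$ depends continuously on $(x_0,t_0)\in\cR(u)$. The crucial and most delicate step is to upgrade this to a uniform directional monotonicity: for every unit vector $\tau$ in a fixed spatial cone about $e(x_0,t_0)$ one should have $\partial_\tau u\ge 0$ throughout a parabolic neighborhood $Q_\rho'(x_0,t_0)$, together with a one-sided control of $\partial_t u$ compatible with parabolic scaling. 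The strategy is to verify the strict inequality $\partial_\tau u_0>0$ off the thin space $\{x_{n-1}\le 0,\ x_n=0\}$ for the model solution, transfer it to $u_r^{(x_0,t_0)}$ for all small $r$ by convergence of the rescalings, and then propagate it to the unscaled $u$ using the equation and the Signorini conditions (a Hopf-type argument across the thin obstacle). Because $\partial_t$ has parabolic homogeneity two rather than one, the time-monotonicity must be handled separately from the spatial cone; this is the part I expect to be the main obstacle.

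Granting the uniform directional monotonicity, the conclusion is essentially formal. Choosing coordinates so that $e(x_0,t_0)=e_{n-1}$, the inequality $\partial_{x_{n-1}}u\ge 0$ near $(x_0,t_0)$ forces the coincidence set $\Lambda(u)\cap Q_\rho'(x_0,t_0)$ to be a sublevel set $\{x_{n-1}\le g(x'',t)\}$ of a function $g:Q_\rho''(x_0'',t_0)\to\R$; the spatial cone of monotonicity directions makes $g$ Lipschitz in $x''$, while the time-monotonicity control yields the $\tfrac12$-Hölder bound in $t$, so that $g$ is parabolically Lipschitz. Finally, nondegeneracy of $u$ at regular points (an immediate consequence of $\kappa=3/2$ and the explicit blowup) shows that the topological free boundary $\Gamma(u)\cap Q_\rho'(x_0,t_0)$ coincides with $\partial\{x_{n-1}\le g\}$ and, combined with the openness already proved, that every point of $\Gamma(u)\cap Q_\rho'(x_0,t_0)$ is regular, which is the claimed identity.
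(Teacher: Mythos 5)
The paper does not prove this proposition: it is stated as a known result and cited directly to \cite{DGPT}, so there is no internal argument here for your sketch to be checked against. Your outline does, however, track the program carried out in \cite{DGPT}: a Gaussian-weighted Almgren--Poon frequency functional with a correction absorbing the inhomogeneity $f$; compactness and classification of parabolically homogeneous blowups with the gap $\kappa\in\{3/2\}\cup[2,\infty)$; upper semicontinuity of $(x_0,t_0)\mapsto\kappa(x_0,t_0)$ to deduce relative openness of $\cR(u)$; and a directional monotonicity argument in a spatial cone, propagated by a Hopf-type comparison, to represent $\Lambda(u)$ as a parabolically Lipschitz subgraph. You also correctly identify the genuinely delicate point, namely that the $\tfrac12$-H\"older control of $g$ in $t$ does not follow from the spatial cone of monotonicity and must be obtained separately from the parabolic scaling of the problem.

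One expository caveat: in \cite{DGPT} the Monneau-type monotonicity formula is calibrated against homogeneous polynomials of even degree and is the tool for the \emph{singular} set, not the regular set. At regular points the relevant ingredients are the Weiss-type monotonicity and, above all, the uniform cone of spatial monotonicity directions, which already determines the tangent plane without invoking Monneau; so while your plan is not wrong, the route taken in the cited reference does not pass through a Monneau argument at regular points. This is a difference in emphasis rather than a gap, since the paper under review simply cites the result and contributes nothing new to its proof.
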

The parabolic Lipschitz continuity of the function $g$ above means
that for some constant $L$ (parabolic Lipschitz constant)
$$
|g(x'',t)-g(y'',s)|\leq L (|x''-y''|^2+|t-s|)^{1/2},\quad
(x'',t),(y'',s)\in Q_\rho''(x_0'',t_0).
$$

\medskip We are now ready to prove the following more precise version
of Theorem~\ref{thm:vt-cont-reg}.
\begin{theorem}\label{thm:ut-Holder-local} Let $u\in
  H^{3/2,3/4}(Q_1^+\cup Q_1')$
  be a solution of the parabolic Signorini problem
  \textup{\eqref{eq:sig-u1}--\eqref{eq:sig-u2}} with
  $f\in H^{2,1}(Q_1^+\cup Q_1)$, extended by the even symmetry in
  $x_n$ to $Q_1$. Then for any $(x_0,t_0)\in\cR(u)\cap Q_{1/4}'$ we
  have
$$
|\partial_t u(x,t)|\leq C(|x-x_0|^2+|t-t_0|)^{\alpha/2},\quad (x,t)\in
Q_{1/2}\setminus \Lambda(u),
$$
for some $\alpha=\alpha_u(x_0,t_0)>0$ and $C=C_u(x_0,t_0)$.
\end{theorem}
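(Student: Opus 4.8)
The plan is to fix the regular point, straighten the geometry near it by a compactness argument, and then run an iteration producing a polynomial decay rate for $\partial_t u$ at that point. After a translation we may take $(x_0,t_0)=(0,0)$, and after a rotation in the $x'$--variables we may assume the regular blow--up is $u_0=c_n\Re(x_{n-1}+i|x_n|)^{3/2}$, so that $\Lambda(u_0)=\{x_{n-1}\le 0,\ x_n=0\}=:\Lambda_0$. Write $w:=\partial_t u$. I first record the properties of $w$ that will be used. By the bound established in \S\ref{sec:bound-time-deriv}, $w\in L_\infty(Q_{1/2})$ with norm controlled by the data, and $\partial_t f\in L_\infty$ since $f\in H^{2,1}$. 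Off the thin space $w$ solves $(\Delta-\partial_t)w=\partial_t f$, and, by the even symmetry and the reflected equation, $\partial_{x_n}w=0$ on the complement of $\Lambda(u)$ in $\{x_n=0\}$. Since $u\equiv 0$ on $\Lambda(u)$ and, by Proposition~\ref{prop:signor-known}, $\Lambda(u)$ is locally the subgraph $\{x_{n-1}\le g(x'',t)\}$, differentiating in $t$ at interior points of $\Lambda(u)$ gives $w=0$ on $\Lambda(u)$; and $w$ is even in $x_n$. Finally, passing to the limit $h\to 0$ in Lemma~\ref{lem:key-obs}, the parts $w^{\pm}$ are subsolutions, $(\Delta-\partial_t)w^{\pm}\ge -(\partial_t f)^-$ in the whole cylinder. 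Viewing $w$ on $\{x_n\ge 0\}$ as the solution of a mixed problem on the flat face $\{x_n=0\}$ --- Dirichlet datum $0$ on $\Lambda(u)$, zero Neumann datum on its complement, interface the parabolically Lipschitz graph $\Gamma(u)$ --- standard regularity for such mixed problems gives that $w$ is continuous up to $\{x_n=0\}$ and locally $C^{\beta}$ there for some $\beta=\beta(n)>0$, with estimates that are \emph{uniform under parabolic rescalings} because rescaling preserves the Lipschitz constant of $\Gamma(u)$.

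Second, since $(0,0)$ is a regular point the rescalings $u_r$ converge to $u_0$, so the coincidence set flattens: for every $\epsilon>0$ there is $\rho_\epsilon>0$ with
$$
\{x_{n-1}\le -\epsilon r\}\cap Q_r'\ \subseteq\ \Lambda(u)\cap Q_r'\ \subseteq\ \{x_{n-1}\le \epsilon r\}\cap Q_r'\qquad (0<r\le\rho_\epsilon).
$$

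The heart of the matter is the following decay step: \emph{there exist $\alpha\in(0,\tfrac12)$, $\theta\in(0,\tfrac12)$, $\epsilon_0>0$ and $C$ --- all depending only on $n$ and the parabolic Lipschitz constant of $\Gamma(u)$ --- such that}
$$
\sup_{Q_{\theta r}}|w|\ \le\ \theta^{\alpha}\,\sup_{Q_r}|w|\ +\ C\,r^2\,\|\partial_t f\|_{L_\infty(Q_1)}\qquad (0<r\le\rho_{\epsilon_0}).
$$
I would prove this by contradiction and compactness. If it fails at a sequence of scales $r_j\to 0$, then, discarding the trivial case $\sup_{Q_{r_j}}|w|\le C\,r_j^2\|\partial_t f\|$ (in which the inequality is immediate), rescaling to unit scale and dividing $w$ by its supremum produces even functions $w_j$ on $Q_1$ with $\sup_{Q_1}|w_j|=1$, $\sup_{Q_\theta}|w_j|>\theta^{\alpha}$, solving $(\Delta-\partial_t)w_j=g_j$ off $\Lambda_j$ with $\|g_j\|_{L_\infty(Q_1)}$ small, and vanishing on coincidence sets $\Lambda_j$ that flatten to $\Lambda_0$ by the previous paragraph. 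Using the uniform $C^{\beta}$ estimate one extracts a limit $w_\infty$ on $Q_{3/4}$ that is even, solves the heat equation in $Q_{3/4}\setminus\Lambda_0$, vanishes on $\Lambda_0$, has $\|w_\infty\|_{L_\infty(Q_{3/4})}\le 1$, and satisfies $\sup_{Q_\theta}|w_\infty|\ge\theta^{\alpha}$. But for the \emph{flat} slit $\Lambda_0$ the minimal homogeneity of the mixed problem is $\tfrac12$ (realized by $\Re(x_{n-1}+i|x_n|)^{1/2}$), so a standard boundary regularity estimate gives $w_\infty\in C^{1/2}$ up to $\Lambda_0$ near the origin with a universal bound; since $0\in\Lambda_0$ and hence $w_\infty(0)=0$, this forces $\sup_{Q_\theta}|w_\infty|\le K\theta^{1/2}$. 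If $\theta$ is chosen small enough that $K\theta^{1/2}<\theta^{\alpha}$ --- possible because $\alpha<\tfrac12$ --- this contradicts $\sup_{Q_\theta}|w_\infty|\ge\theta^{\alpha}$, and proves the decay step. Iterating it from $r=\rho_{\epsilon_0}$ downward (the geometric right--hand term summing since $\alpha<2$), and bounding $\sup_{Q_{\rho_{\epsilon_0}}}|w|$ by the data through \S\ref{sec:bound-time-deriv}, yields $\sup_{Q_r}|w|\le C_u\,r^{\alpha}$ for $0<r\le\rho_{\epsilon_0}$; since $w=\partial_t u=0$ on $\Lambda(u)$ and $|w|\le\|w\|_{L_\infty(Q_{1/2})}$ otherwise, absorbing the remaining range into the constant gives the estimate claimed on $Q_{1/2}$, with $C=C_u(x_0,t_0)$ and $\alpha=\alpha_u(x_0,t_0)$ (one may in fact take $\alpha=\tfrac14$).

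The step I expect to be the main obstacle is the compactness input of the first two paragraphs: the uniform (in the rescaling parameter) Hölder estimate for $w=\partial_t u$ up to the coincidence set --- in particular the control near the interface $\Gamma(u)$, which may only be assumed parabolically Lipschitz by Proposition~\ref{prop:signor-known} --- together with the verification that the blow--up limit genuinely solves the flat--slit problem and enjoys the quoted $C^{1/2}$ regularity; the treatment of the nonzero right--hand side $\partial_t f$ (bounded, since $f\in H^{2,1}$) is a lower--order bookkeeping point handled by the normalization. I remark that the exponent can be pushed toward $\tfrac32$ (the next admissible homogeneity of the mixed problem) by additionally using that $u\ge 0$ on $Q_1'$: this rules out a $\tfrac12$--homogeneous blow--up of $\partial_t u$, since such a blow--up would force $u$ to change sign on $Q_1'$. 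This refinement is not needed for the stated result.
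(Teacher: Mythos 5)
Your proposal is a correct line of attack, but it takes a genuinely different route from the paper's, and it also leaves the heaviest part of the argument implicit.

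The paper works with the incremental quotients $U_h$ rather than with $w=\partial_t u$ directly, and the H\"older decay is obtained by a single, direct barrier comparison: $U_h^\pm$ are subsolutions (Lemma~\ref{lem:key-obs}), $U_h$ vanishes on the shifted coincidence set $\Lambda_h$, which is a parabolically Lipschitz subgraph with positive thermal capacity at $(x_h,t_0)$ at every scale, and comparison with a caloric function in $\Omega_h(R)$ plus a barrier as in \cite[Lemma~3.2]{PS} yields $\omega(\tau R)\le\theta\,\omega(R)+CR^2$, hence the decay, after which one lets $h\to 0$. No blow-up, compactness, or flatness of $\Lambda(u)$ is used — only the parabolic Lipschitz geometry of $\Gamma(u)$ near a regular point, via Proposition~\ref{prop:signor-known}. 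Your argument is instead an improvement-of-flatness scheme: normalize, pass to a blow-up limit $w_\infty$ solving the flat slit problem with right-hand side going to zero, invoke $C^{1/2}$ regularity for the flat slit, and get a contradiction. This can indeed be made to work, and the payoff is a better exponent (any $\alpha<1/2$), which is why you can claim $\alpha=1/4$; the paper's barrier argument produces some (small) $\alpha>0$ depending on the parabolic Lipschitz constant $L$.

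Two caveats. First, the step you describe as ``standard regularity for mixed problems gives a uniform $C^\beta$ estimate up to a parabolically Lipschitz free boundary'' is precisely the step the paper proves via the barrier comparison; it is not off-the-shelf for parabolic slit domains with a parabolically Lipschitz interface (this is the content of \cite{PS}), and invoking it as ``standard'' hides the real work. Ironically, once you have that uniform estimate, the barrier already gives decay directly without any compactness, so your compactness step is only buying a sharper exponent. Second, your contradiction setup is slightly misphrased: with $\alpha$, $\theta$, and $C$ all fixed and the inequality failing along $r_j\to 0$, you only get $\|g_j\|_\infty\le 1/C$, not $\to 0$; the standard fix is to let $C=j$ along the contradiction sequence (or equivalently to assume the claim fails for every $C$), which then forces the right-hand sides to vanish in the limit so that $w_\infty$ is caloric. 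A further small point: it is cleaner to perform the blow-up and compactness on $U_h^\pm$ (as the paper does throughout) and only pass $h\to 0$ at the end — working with $w=\partial_t u$ directly requires justifying, not merely asserting, that $w$ is a weak solution of the mixed problem with $w=0$ on $\Lambda(u)$ and $\partial_{x_n}w=0$ on its complement, since a priori $w$ is only in $L_\infty$.
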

\begin{proof} Let $\rho=\rho_u(x_0,t_0)>0$ be as in
  Proposition~\ref{prop:signor-known}. Without loss of generality we
  may assume $\rho\leq 1/4$. Consider then the incremental quotients
  $U_h$ and $F_h$ defined in the proof of Theorem~\ref{thm:vt-bdd}. In
  addition to Lemma~\ref{lem:key-obs}, we then also have that
$$
U_h=0\quad\text{on }\Lambda_h,
$$
where
$$
\Lambda_h=\{(x',t):x_{n-1}\leq g(x'',t)-L h^{1/2},x_n=0\}\cap
Q_\rho'(x_0,t_0).
$$
Here $g$ is the function in the representation of
$\Lambda(u)\cap Q_\rho'(x_0,t_0)$ and $L$ is the parabolic Lipschitz
constant of $g$.  Then $\Lambda_h$ is a subgraph of a parabolically
Lipschitz function in $Q_\rho'(x_0,t_0) $, with the same parabolic
Lipschitz constant $L$ as $g$ (actually, just a shift of
$g$). Besides, from the assumption $(x_0,t_0)\in \Gamma(u)$, we have
that
$$(x_h,t_0):=(x_0-Lh^{1/2}e_{n-1},t_0)\in \Lambda_h.$$
Then, at every scale, $\Lambda_h$ has a positive thermal capacity at
$(x_h,t_0)$ (see e.g. \cite[\S3.2]{PS}). We then claim that
\begin{equation}\label{eq:Uh-est}
  U_h^\pm(x,t) \leq C(|x-x_h|^2+|t-t_0|)^{\alpha/2},\quad (x,t)\in Q_{1/2},
\end{equation}
with $\alpha>0$ depending only on the parabolic Lipschitz norm of $g$,
and $C$ depending only on $n$, $\rho$, and the $L_\infty$ norms of
$U_h$ and $F_h$. Since the latter are uniformly bounded by
$\|u\|_{L_2(Q_1)}$ and $\|f\|_{H^{2,1}(Q_1)}$, we can pass to the
limit as $h\to 0+$ to obtain
$$
|\partial_t u(x,t)|\leq C(|x-x_0|^2+|t-t_0|)^{\alpha/2},\quad (x,t)\in
Q_{1/2}\setminus \Lambda(u).
$$
Thus, to finish the proof, we need to
establish~\eqref{eq:Uh-est}. This, in principle, follows from
\cite[Theorem~6.32]{Lie}, but with the uniform density condition on
the complement (condition (A)) replaced with the uniform thermal
capacity condition that we have for $\Lambda_h$. Nevertheless, we give
a more direct proof below.

Fix $0<R<\rho$ and let $W$ solve the Dirichlet problem (see
Fig.~\ref{fig:Omegah})
\begin{align*}
  (\Delta -\partial_t)W=0&\quad\text{in
                           }\Omega_h(R):=[B_{R}(x_h)\times(t_0-R^2,t_0+R^2)]\addtocounter{footnote}{1}\footnotemark\cap [Q_1\setminus\Lambda_h],\\
  W=U_h^\pm&\quad\text{on }\partial_p \Omega_h(R).
\end{align*}%
\footnotetext{Note that $B_{R}(x_h)\times(t_0-R^2,t_0+R^2)$ is the
  ``full'' parabolic cylinder at $(x_h,t_0)$, while $Q_R(x_h,t_0)$ is
  the backward-in-time cylinder $B_{R}(x_h)\times(t_0-R^2,t_0]$}%
\definecolor{lightpink}{RGB}{230,211,211}%
\begin{figure}[t]
  \centering
  \begin{picture}(150,150)
    \put(4,0){\includegraphics[height=150pt]{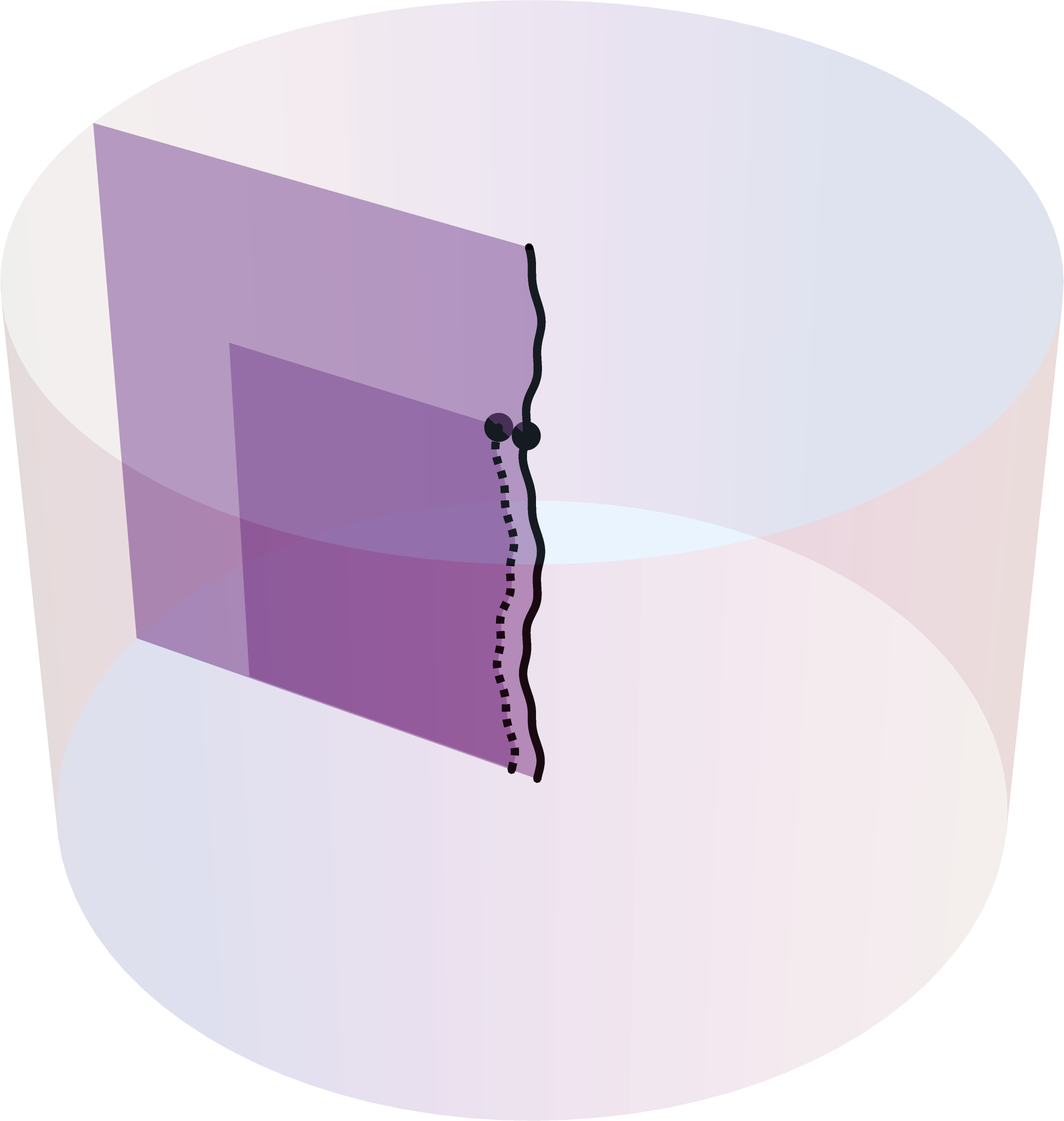}}
    \put(80,90){\small $(x_0,t_0)$} \put(47,100){\small $(x_h,t_0)$}
    \put(50,75){\small\color{lightpink} $\Lambda_h$}
  \end{picture}
  \caption{$\Omega_h(R)$}
  \label{fig:Omegah}
\end{figure}%
By using Lemma~\ref{lem:key-obs} and comparing $W$ with
$U_h^\pm+ C(|x-x_h|^2-(t-t_0)-2R^2)$ in $\Omega_h(R)$ with
$(2n-1)C\geq\|f\|_{H^{2,1}(Q_1)}\geq \|F_h^-\|_{L_\infty(Q_1)}$, we
see that
$$
U_h^\pm\leq W+CR^2\quad\text{on }\Omega_h(R).
$$
On the other hand, using a comparison with a barrier function as in
\cite[Lemma~3.2]{PS}, we have
$$
W(x,t)\leq
C\left(\frac{|x-x_h|^2+|t-t_0|}{R^2}\right)^{\beta/2}\sup_{\Omega_h(R)}
U_h^\pm,
$$
with $C$ depending only on the parabolic Lipschitz constant $L$ of $g$
and the dimension $n$.  Here we have used that
$\sup_{\Omega_h(R)}W=\sup_{\Omega_h(R)} U_h^\pm$, by the maximum
principle.  Denoting
$$
\omega(r)=\sup_{\Omega_h(r)} U_h^\pm,
$$
we then obtain
$$
\omega(r)\leq C\left(\frac{r}R\right)^\beta \omega(R)+CR^2,\quad
0<r\leq R.
$$
Choosing $0<\tau<1$ small so that $\theta=C\tau^\beta<1$, we then have
$$
\omega(\tau R)\leq \theta\, \omega(R)+CR^2.
$$
Then, a standard iterative argument (see \cite[Lemma~8.23]{GT}) gives
$$
\omega(R)\leq CR^\alpha, \quad R\leq \rho,
$$
for $\alpha>0$, which establishes \eqref{eq:Uh-est} and completes the
proof of the theorem.
\end{proof}

\begin{corollary}\label{cor:C1a-xt} Let $u$, $(x_0,t_0)$, $\rho$ and $g$ be as in
  Proposition~\ref{prop:signor-known}. Then
  $\Gamma(u)\cap Q_\rho'(x_0,t_0)$ is an $(n-2)$-dimensional
  $C^{1,\alpha}$ surface both in the $x$ and $t$ variables.
\end{corollary}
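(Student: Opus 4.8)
The plan is to combine Proposition~\ref{prop:signor-known}, which already gives that $\Gamma(u)\cap Q_\rho'(x_0,t_0)$ is the graph $x_{n-1}=g(x'',t)$ of a parabolically Lipschitz function $g$, with the newly obtained H\"older continuity of $\partial_t u$ at regular points (Theorem~\ref{thm:ut-Holder-local}) in order to upgrade the regularity of $g$ from parabolically Lipschitz to $C^{1,\alpha}$ jointly in $x''$ and $t$. The spatial $C^{1,\alpha}$ regularity of $g$ (in the $x''$ variables, with $t$ frozen) is already known from \cite{DGPT}: once one knows $\nabla v\in H^{\gamma,\gamma/2}_{\loc}$ and the nondegeneracy of $\partial_{x_{n-1}}v$ along the regular set, the implicit function theorem yields that $g(\cdot,t)\in C^{1,\alpha}$ with estimates uniform in $t$. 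So the only genuinely new content of the corollary is the $C^{1,\alpha}$ regularity of $g$ in the $t$ variable, i.e. the existence and H\"older continuity of $\partial_t g$.

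First I would recall how the free boundary graph is produced near a regular point: along $\cR(u)\cap Q_\rho'$ one has $\partial_{x_{n-1}}v - \partial_{x_{n-1}}\phi = \partial_{x_{n-1}} u \neq 0$ (the blowup $u_0=c_n\Re(x_{n-1}+i|x_n|)^{3/2}$ has $\partial_{x_{n-1}}u_0>0$ on $\{x_{n-1}>0\}$, and this nondegeneracy propagates by the compactness/continuity arguments of \cite{DGPT}), so that $g$ is defined implicitly by $v(x'',g(x'',t),0,t)=\phi(x'',g(x'',t),t)$, equivalently $u(x'',g(x'',t),0,t)=0$. Differentiating this identity formally in $t$ gives
$$
\partial_t g(x'',t) = -\frac{\partial_t u(x'',g(x'',t),0,t)}{\partial_{x_{n-1}} u(x'',g(x'',t),0,t)}.
$$
Here is where Theorem~\ref{thm:ut-Holder-local} enters: it tells us that $\partial_t u$ extends continuously by zero to $\Lambda(u)$ and in fact $|\partial_t u(x,t)|\le C(|x-x_0|^2+|t-t_0|)^{\alpha/2}$ near the free boundary point, so the numerator is H\"older continuous up to the free boundary, while the denominator is H\"older continuous and bounded away from zero there (by the spatial $C^{1,\alpha}$ regularity and nondegeneracy). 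The quotient is therefore H\"older continuous, which identifies $\partial_t g$ as a H\"older function. Combined with the spatial $C^{1,\alpha}$ bounds this gives $g\in C^{1,\alpha}$ in both $x''$ and $t$, i.e.\ $\Gamma(u)\cap Q_\rho'(x_0,t_0)$ is an $(n-2)$-dimensional $C^{1,\alpha}$ surface in the parabolic sense.

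The main obstacle — and the only point requiring care — is justifying the formal differentiation in $t$ of the implicit relation $u(x'',g(x'',t),0,t)=0$, since a priori $g$ is only Lipschitz in $t$ and $u$ is only $H^{3/2,3/4}$ up to $Q_1'$, so $\partial_t u$ restricted to the thin space need not be continuous across $\Gamma(u)$ without Theorem~\ref{thm:ut-Holder-local}. I would handle this as follows: fix $x''$, take the difference quotient $\bigl(u(x'',g(x'',t+h),0,t+h)-u(x'',g(x'',t),0,t)\bigr)/h = 0$, split it into a spatial increment and a time increment, and use (i) the $C^{1,\alpha}$ regularity in $x''$ together with the Lipschitz bound on $g$ to control the spatial term via $\partial_{x_{n-1}}u$ and an error $o(1)$, and (ii) Theorem~\ref{thm:ut-Holder-local} (which gives pointwise control of $\partial_t u$ up to $\Lambda(u)$, hence up to the graph) to pass to the limit $h\to 0$ in the time term. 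This yields the displayed formula for $\partial_t g$ as a genuine (not merely weak) derivative, and the H\"older modulus of $\partial_t g$ follows from the H\"older moduli of $\partial_t u$, $\partial_{x_{n-1}}u$, and $g$ itself. All estimates are local near $(x_0,t_0)$, which is exactly the regime in which Proposition~\ref{prop:signor-known} and Theorem~\ref{thm:ut-Holder-local} apply, so no global hypotheses beyond those of Theorem~\ref{thm:vt-bdd} are needed.
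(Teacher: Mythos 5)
Your plan has the right high-level shape -- identify $\partial_t g$ with the quotient
$$
\partial_t g = -\frac{\partial_t u}{\partial_{x_{n-1}} u}
$$
along the free boundary and feed in Theorem~\ref{thm:ut-Holder-local} to control the numerator -- but there is a genuine gap in how you propose to control that quotient. You assert that the denominator $\partial_{x_{n-1}} u$ is ``H\"older continuous and bounded away from zero'' at the free boundary. This is false: near a regular point the blowup is $u_0 = c_n \Re(x_{n-1}+i|x_n|)^{3/2}$, so $\partial_{x_{n-1}} u_0 = \tfrac{3c_n}{2}\Re(x_{n-1}+i|x_n|)^{1/2}$, which vanishes on $\Gamma(u_0)=\{x_{n-1}=0,\,x_n=0\}$ like $\dist^{1/2}$. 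Accordingly $\partial_{x_{n-1}} u$ vanishes on the free boundary, and the quotient $\partial_t u/\partial_{x_{n-1}} u$ is of the form $0/0$ there. The implicit function theorem therefore cannot be applied directly to $u(x'',g(x'',t),0,t)=0$, and your difference-quotient argument breaks at exactly the same spot: you cannot divide by $\partial_{x_{n-1}} u$ on the graph.

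The missing ingredient is the \emph{boundary Harnack principle} in parabolic slit domains (\cite[\S7]{PS}). Both $\partial_t u$ and $\partial_{x_{n-1}} u$ (and the tangential $\partial_{x_j} u$) are caloric functions on $Q_\rho(x_0,t_0)\setminus\Lambda(u)$ which vanish continuously on $\Lambda(u)$ -- for $\partial_t u$ this is precisely what Theorem~\ref{thm:ut-Holder-local} supplies -- and the boundary Harnack principle then shows that the quotients $\partial_t u/\partial_{x_{n-1}} u$ and $\partial_{x_j} u/\partial_{x_{n-1}} u$ extend to $H^{\alpha,\alpha/2}$ functions up to $\Lambda(u)$, even though numerator and denominator both degenerate there. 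This is the mechanism in \cite[Theorem~11.6]{DGPT} (and in the paper's own proof). One then applies the implicit function theorem not to $\Gamma(u)=\{u=0\}$ directly but to the level sets $\{u=\epsilon\}$ with $\epsilon>0$, where $\partial_{x_{n-1}} u$ \emph{is} bounded away from zero; the uniform H\"older bounds on $\partial_{x_j}g_\epsilon$ and $\partial_t g_\epsilon$ coming from the boundary Harnack quotients survive the limit $\epsilon\to 0+$, giving $g\in C^{1,\alpha}$ in $x''$ and $t$. Note also that your claim that the spatial $C^{1,\alpha}$ regularity of $g$ follows from $\nabla v\in H^{\gamma,\gamma/2}_{\loc}$ plus nondegeneracy and the implicit function theorem is subject to the same objection; that part of the result, too, rests on boundary Harnack rather than a direct implicit function theorem argument.
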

\begin{proof} One argues precisely as in the proof of
  \cite[Theorem~11.6]{DGPT} to show that
$$
\frac{\partial_{x_j} u}{\partial_{x_{n-1}}u},\ j=1,\ldots,{n-2},\quad
\frac{\partial_t u}{\partial_{x_{n-1}}u}\in
H^{\alpha,\alpha/2}(Q_{\rho/2}'(x_0,t_0)),
$$
by the boundary Harnack principle in parabolic slit domains
\cite[\S7]{PS}. The argument works for $\partial_t u$ since we now
know that it continuously vanishes on $\Lambda(u)\cap Q_\rho(x_0,t_0)$
by Theorem~\ref{thm:ut-Holder-local}. Consequently, the level sets
$\{u=\epsilon\}\cap Q_{\rho/2}' (x_0,t_0)$ are given as graphs
$$
x_{n-1}=g_\epsilon(x'',t)
$$
with uniform estimates on the H\"older norms of
$\partial_{x_j}g_\epsilon$, $j=1,\ldots,n-2$, and
$\partial_t g_\epsilon$. This then implies the H\"older continuity of
$\partial_{x_j}g$ and $\partial_t g$ and completes the proof of the
corollary.
\end{proof}

\begin{remark} Very recently, in \cite{BSZ}, it was proved that when
  the thin obstacle $\phi$ is identically zero, the free boundary is
  $C^\infty$ both in the $x$ and $t$ variables near regular free
  boundary points. More precisely, the function $g$ in the
  representation of $\Gamma(u)$ in Proposition~\ref{prop:signor-known}
  and Corollary~\ref{cor:C1a-xt} is $C^\infty$. This is established by
  extending the higher-order boundary Harnack principle in \cite{DS}
  to parabolic slit domains, and using an argument similar to the
  proof of Corollary~\ref{cor:C1a-xt} above. An important ingredient
  in the proof is our Theorem~\ref{thm:vt-cont-reg}, which allows the
  iteration steps in the $t$ variable.
\end{remark}

\begin{bibdiv}
\begin{biblist}
		
\bib{AU1}{article}{
   author={Arkhipova, A. A.},
   author={Ural{\cprime}tseva, N. N.},
   title={Regularity of the solution of a problem with a two-sided limit on
   a boundary for elliptic and parabolic equations},
   language={Russian},
   note={Translated in Proc.\ Steklov Inst.\ Math.\ {\bf 1989}, no.\ 2,
   1--19;
   Boundary value problems of mathematical physics, 13 (Russian)},
   journal={Trudy Mat. Inst. Steklov.},
   volume={179},
   date={1988},
   pages={5--22, 241},
   issn={0371-9685},
   review={\MR{964910 (90h:35044)}},
}

\bib{AU2}{article}{
   author={Arkhipova, A.},
   author={Uraltseva, N.},
   title={Sharp estimates for solutions of a parabolic Signorini problem},
   journal={Math. Nachr.},
   volume={177},
   date={1996},
   pages={11--29},
   issn={0025-584X},
   review={\MR{1374941 (97a:35084)}},
   doi={10.1002/mana.19961770103},
}

\bib{BSZ}{article}{
   author={Banerjee, Agnid},
   author={Smit Vega Garcia, Mariana},
   author={Zeller, Andrew},
   title={Higher regularity of the free boundary in the parabolic
     Signorini problem},
   date={2015},
   status={preprint},
   label={BSZ15}
}

\bib{DS}{article}{
   author={De Silva, Daniela},
   author={Savin, Ovidiu},
   title={Boundary Harnack estimates in slit domains and applications to thin free boundary problems},
   status={preprint},
   date={2014-06},
   eprint={arXiv:1406.6039}
}

\bib{GT}{book}{
   author={Gilbarg, David},
   author={Trudinger, Neil S.},
   title={Elliptic partial differential equations of second order},
   series={Grundlehren der Mathematischen Wissenschaften [Fundamental
   Principles of Mathematical Sciences]},
   volume={224},
   edition={2},
   publisher={Springer-Verlag, Berlin},
   date={1983},
   pages={xiii+513},
   isbn={3-540-13025-X},
   review={\MR{737190 (86c:35035)}},
   doi={10.1007/978-3-642-61798-0},
}

\bib{DGPT}{article}{
  author={Danielli, Donatella},
  author={Garofalo, Nicola},
  author={Petrosyan, Arshak},
  author={To, Tung},
  title={Optimal regularity and the free boundary in the parabolic
    Signorini problem},
  journal={Mem. Amer. Math. Soc},
  status={to appear},
  pages={vi+107 pp.},
  date={2013-06},
  eprint={arXiv:1306.5213},
  label={DGPT13},
}
\bib{DL}{book}{
   author={Duvaut, G.},
   author={Lions, J.-L.},
   title={Inequalities in mechanics and physics},
   note={Translated from the French by C. W. John;
   Grundlehren der Mathematischen Wissenschaften, 219},
   publisher={Springer-Verlag, Berlin-New York},
   date={1976},
   pages={xvi+397},
   isbn={3-540-07327-2},
   review={\MR{0521262 (58 \#25191)}},
}

\bib{LSU}{book}{
   author={Lady{\v{z}}enskaja, O. A.},
   author={Solonnikov, V. A.},
   author={Ural{\cprime}ceva, N. N.},
   title={Linear and quasilinear equations of parabolic type},
   language={Russian},
   series={Translated from the Russian by S. Smith. Translations of
   Mathematical Monographs, Vol. 23},
   publisher={American Mathematical Society, Providence, R.I.},
   date={1968},
   pages={xi+648},
   review={\MR{0241822 (39 \#3159b)}},
}

\bib{Lie}{book}{
   author={Lieberman, Gary M.},
   title={Second order parabolic differential equations},
   publisher={World Scientific Publishing Co., Inc., River Edge, NJ},
   date={1996},
   pages={xii+439},
   isbn={981-02-2883-X},
   review={\MR{1465184 (98k:35003)}},
   doi={10.1142/3302},
}

\bib{PS}{article}{
   author={Petrosyan, Arshak},
   author={Shi, Wenhui},
   title={Parabolic boundary Harnack principles in domains with thin
   Lipschitz complement},
   journal={Anal. PDE},
   volume={7},
   date={2014},
   number={6},
   pages={1421--1463},
   issn={2157-5045},
   review={\MR{3270169}},
   doi={10.2140/apde.2014.7.1421},
}

\end{biblist}
\end{bibdiv}

\end{document}